\documentclass{amsart}

\usepackage{graphicx}
\usepackage{amssymb}
\usepackage{amsrefs}
\usepackage{hyperref}

\theoremstyle{plain} %
\newtheorem{theorem}{Theorem}[section]
\newtheorem{proposition}[theorem]{Proposition}
\newtheorem{lemma}[theorem]{Lemma}

\newtheorem{remark}[theorem]{Remark}

\begin{document}

\title{Existence of stable periodic orbits in billiards close to
lemon and moon billiards}

\author{Alexander Grigo}
\email{alexander.grigo@ou.edu}
\address{Department of Mathematics, University of Oklahoma, Norman OK, USA}

\begin{abstract}
  It is known that at lemon and moon billiards that have a sufficiently
  small curvature on one of their circular arcs are hyperbolic.
  In this paper we show that replacing this circular arc by a more
  general boundary component of small curvature could produce
  billiard tables that admit nonlinearly stable periodic orbits.
\end{abstract}

\maketitle

\tableofcontents

\section{Introduction and main results}

The billiard flow describes the motion of a point particle
along straight lines inside a domain (billiard table) with
specular reflections off the boundary components. The
associated billiard map is the map induced by the billiard flow
going from one reflection off the boundary to the next.
The mathematical tools to investigate dynamical properties of
billiards are best developed for billiards in two-dimensional domains.

The first rigorous proof of hyperbolicity and ergodicity
of some billiards was
carried out by Sinai in \cite{MR274721}. The billiard domains
he considered are now called dispersing or Sinai billiards,
which consist of boundary components with strictly positive curvature
only. One such billiard table is shown in Fig.~\ref{fig_sinai_bunimovich}.
\begin{figure}[!ht]
  \includegraphics[width=0.35\textwidth]{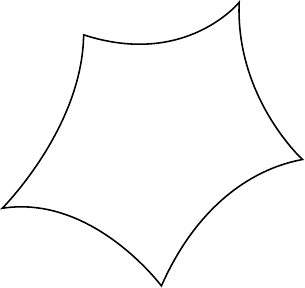}
  \includegraphics[width=0.35\textwidth]{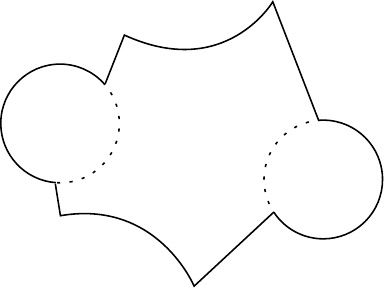}
  \caption{Examples of Sinai billiards (left)
  and Bunimovich billiards (right).}
  \label{fig_sinai_bunimovich}
\end{figure}
Hyperbolicity in dispersing billiards is due to the fact that
dispersing fronts of billiard trajectories remain dispersing and
expand exponentially from collision to collision
in a geometrically intuitive way.

Because of the clear geometric mechanism for hyperbolicity in
dispersing billiards it came as a surprise that just a few years
after Sinai's work Bunimovich proved in
\cites{MR342677,MR0357736,MR530154}
hyperbolicity and ergodicity of billiard tables with focusing boundary
components. In fact, some of the billiard tables he considered were
convex.
An example of such a Bunimovich billiard is shown in
Fig.~\ref{fig_sinai_bunimovich}.
Hyperbolicity in Bunimovich billiards is due to the
so-called defocusing mechanism, which requires that if
fronts of billiard trajectories become focusing due to a reflection
off a focusing boundary component, then
the free path after the last reflection off that boundary component
must be long enough to ensure that the front of billiard trajectories
will become dispersing and expands sufficiently before the next
collision off a boundary component. A convenient way to implement this
mechanism is to allow only arcs of circles for focusing boundary components.
In this case the defocusing mechanism
is guaranteed by an easy to verify geometric condition:
for each circular arc the entire circle is contained inside
the billiard table. This is indicated by the dashed lines in
Fig.~\ref{fig_sinai_bunimovich}.

For a detailed exposition of dispersing billiards and an introduction
to Bunimovich billiards we refer the reader to \cite{MR2229799}.
For hyperbolic billiards with more general types of focusing boundary
components we refer the reader to
\cites{MR848647,MR954676,MR1133266,MR984610,4043756,MR1963965,MR1179172}.

A particular Bunimovich billiard that will be considered in this paper
is a particular version of what is sometimes called flower billiard,
namely a flower billiard with two petals. This billiard table consists
of the union of two overlapping circles. In the special case that these
two circles have the same radius, as is shown in
Fig.~\ref{fig_flower_fold},
\begin{figure}[!ht]
  \includegraphics[width=0.7\textwidth]{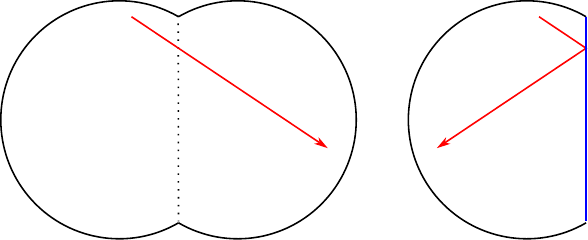}
  \caption{Symmetric flower billiard with two petals and the
  corresponding folded flower table.}
  \label{fig_flower_fold}
\end{figure}
the resulting table has a reflection
symmetry about the dashed line shown in Fig.~\ref{fig_flower_fold}.
Cutting the symmetric two petal flower billiard table along that
axis (and removing the right half)
and inserting a straight line boundary component
we obtain a reduced or folded billiard table, shown in the right side of
Fig.~\ref{fig_flower_fold}. The billiard dynamics in the folded table
is essentially equivalent to the billiard dynamics of the two petal
flower billiard table.
This orbit equivalence is indicated in Fig.~\ref{fig_flower_fold}
by displaying two related billiard trajectory segments.

There are two modifications of the folded flower table
that have been considered in the literature.
One is obtained by replacing the straight line segment
by a circular arc that becomes a focusing boundary component
and thus results in a convex table. These billiards
are sometimes called lemon billiards. The other is obtained
by replacing the straight line by a circular arc that becomes
a dispersing boundary component. These billiards are sometimes called
moon billiards. Both of these are illustrated in
Fig.~\ref{fig_lemon_moon}.
\begin{figure}[!ht]
  \includegraphics[width=0.7\textwidth]{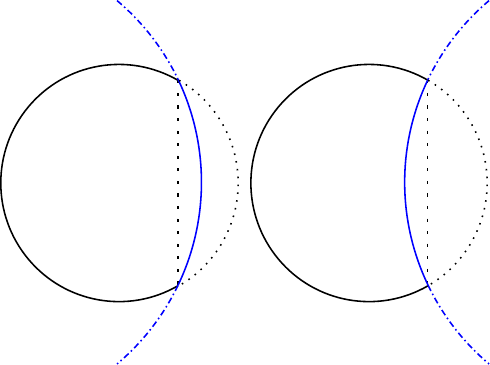}
  \caption{Modifications of the folded flower billiard to a lemon
  billiard and to a moon billiard.}
  \label{fig_lemon_moon}
\end{figure}

It is important to note that neither the lemon billiard nor the moon
billiard is a Bunimovich billiard, and that the defocusing mechanism
fails for both of them.
Therefore, hyperbolicity of either table is not immediately
implied by classic results available in the literature, nor is
there a clear mechanism for hyperbolicity readily available.

If the curvature of the added circular arc is very small we may
think of the resulting lemon or moon billiard as a small perturbation
of the folded flower billiard. And since the folded flower billiard
is hyperbolic, as shown by Bunimovich's work, it might seem
reasonable to expect that perturbations of the folded flower billiard
to either a lemon or moon billiard are also hyperbolic.

Indeed, the hyperbolicity of such lemon billiards was first investigated
in \cite{MR3452271} and later rigorously proven in \cite{MR4183371}.
Numerical evidence for the hyperbolicity of such moon billiards
was established in \cite{MR3456006}. For large curvatures of the
added circular arc the resulting moon billiards can exhibit elliptic
periodic orbits as was shown in \cite{MR3456006}. (See also
\cite{MR4651576} for related work on annular billiards.)

In the present paper we consider other modifications of the folded
flower billiard by replacing the straight line segment not by
arcs of a circle, but by a more general boundary component.
An illustration of such a
$\Gamma$ is shown in Fig.~\ref{fig_setup_basic}.
\begin{figure}[!ht]
  \includegraphics[width=0.5\textwidth]{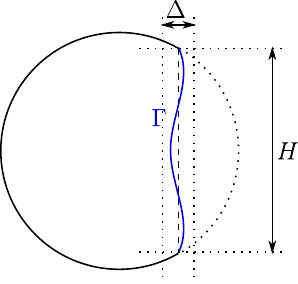}
  \caption{Perturbations of the folded two-petal flower billiard.}
  \label{fig_setup_basic}
\end{figure}
If the curvature of $\Gamma$ is uniformly small
the resulting billiard table is a small geometric perturbation
of the folded flower table, just as the lemon and moon tables were.
Again one might expect that the resulting billiard will be hyperbolic.
However, in the present paper it will be shown that this is not always so.

In order to state a precise version of this we need to introduce some
notation. Let $H>0$ denote the height of the straight line segment in the
folded flower table. Let $\Delta>0$ denote the width of a vertical strip
containing the vertical line segment. Note that for the circular arcs
used to construct the perturbation to a lemon or a moon billiard
the corresponding curvature (in absolute value) multiplied by $H$
is of the same order as the horizontal width it occupies inside
the resulting table divided by $H$.
Therefore, we will consider boundary components $\Gamma$
whose curvature ${\mathcal K}$ (in absolute value)
multiplied by $H$ is of the same order as $\frac{\Delta}{H}$.
And $\Delta$ is taken to be arbitrarily small.
\begin{theorem}
  \label{thm_main}
  There exists a countably infinite set $S$
  such that for each $H \in S$ the following hold.
  \begin{enumerate}
    \item
      \label{item1_thm_main}
      There exists a constant $C>0$ so that
      for all $\Delta>0$ small enough there exists a
      piece-wise smooth $\Gamma$ that is
      entirely contained in the vertical strip of width $\Delta$,
      with
      $|{\mathcal K}|\,H \leq C\,\frac{\Delta}{H}$ on
      $\Gamma$, and such that the resulting perturbed billiard
      table admits a nonlinearly stable periodic orbit $\gamma$ with
      non-vanishing first Birkhoff coefficient.

    \item
      \label{item2_thm_main}
      $\Gamma$ can be taken to be symmetric about the
      horizontal symmetry axis (see Fig.~\ref{fig_setup_basic}), and
      to also satisfy one of the following properties
      \begin{enumerate}
 \item
   $\Gamma$ is everywhere smooth and
   ${\mathcal K} >0$ at all points where $\gamma$
   reflects off $\Gamma$.

 \item
   $\Gamma$ is piece-wise smooth and
   ${\mathcal K} >0$ on $\Gamma$.

 \item
   $\Gamma$ is everywhere smooth, strictly convex with
   ${\mathcal K} < 0$ everywhere on $\Gamma$.
      \end{enumerate}
  \end{enumerate}
\end{theorem}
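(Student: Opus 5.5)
The plan is to exploit the parabolic (neutral) periodic orbits of the folded flower table and to use $\Gamma$ as a small, localized perturbation that turns one of them into an elliptic orbit. In the folded flower table, i.e.\ a circular arc of radius $R$ (normalized to $R=1$) cut by a vertical chord of height $H$, there are periodic orbits that reflect along the circle in a regular polygonal pattern and hit the chord perpendicularly or symmetrically. For special values of $H$ the orbit bouncing along the arc comes back and hits the straight segment at the symmetry axis at normal incidence. By symmetry it then retraces itself, giving a periodic orbit consisting of $n$ reflections on the arc and one on the chord. Its linearization is the product of $n$ circular-arc reflection matrices, each parabolic and with the same angle, together with a flat reflection. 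A direct computation, using the fact that successive reflections in a circle generate a shear in the coordinates (arclength, angle), shows that the trace is exactly $2$ for the geometric configurations that close up. The set $S$ is the countable family of heights $H=H_n$ for which such a closing orbit exists (for instance, chord positions where the $n$-fold rotation by the central angle $2\theta$ lands at the normal point). The first step is to write these $H_n$ down explicitly and to verify that the orbit is parabolic but degenerate in a controllable way: the monodromy is $\pm$ a nontrivial shear $\begin{pmatrix}1& a_n\\0&1\end{pmatrix}$ with $a_n\neq 0$.

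The second step is to perturb. I will replace the chord near the reflection point by a curve $\Gamma$ that lies in the strip of width $\Delta$ and has curvature $\kappa$ with $|\kappa| H\sim \Delta/H$ at the reflection point, keeping it symmetric about the horizontal axis. Displacing the reflection point horizontally by $O(\Delta)$ moves the orbit. To keep it periodic, I will exploit the symmetry: an orbit hitting $\Gamma$ normally on the axis and hitting the arc normally (or its mirror-symmetric partner) is periodic. Such an orbit persists by the implicit function theorem in $\Delta$, because it is the intersection of two symmetry lines that cross transversally near the unperturbed orbit (transversality comes from $a_n\ne0$). The reflection matrix at $\Gamma$ acquires the factor $\begin{pmatrix}1&0\\ -2\kappa/\cos\varphi & 1\end{pmatrix}$, and the arc legs change by $O(\Delta)$. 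The trace then becomes $2 + c_1 a_n \kappa + c_2\Delta + O(\Delta^2)$ with explicit $c_1,c_2$. Since $\kappa$ and $\Delta$ are of the same order and $\kappa$ is ours to choose, the sign of $\kappa$ together with the scaling constant $C$ can be chosen so that the trace lies strictly in $(-2,2)$, which makes the orbit elliptic. The three variants of item (2) correspond to choosing $\kappa>0$ only near the bounce point with $\Gamma$ smooth elsewhere, $\kappa>0$ piecewise, or $\kappa<0$ throughout. Which signs are admissible depends on the sign of $a_n$ and of $c_2$. In the cases where they do not cooperate, I will instead take a different family in $S$, for example odd versus even $n$, or orbits hitting the arc in the opposite rotational direction.

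The third step, which I expect to be the main obstacle, is nonlinear stability. It requires checking that the eigenvalues avoid low-order resonances ($\lambda^k\neq1$ for $k\le4$) and computing the first Birkhoff coefficient to show it is nonzero, so that Moser's twist theorem applies. The rotation number of the elliptic orbit is $O(\sqrt{\Delta})$, so resonances are automatically avoided for small $\Delta$. The Birkhoff coefficient, however, must be computed from the third-order jet of the return map. That jet includes the jet of $\Gamma$ at the reflection point, namely its curvature derivative (which vanishes by symmetry) and its second derivative of curvature, which is free to choose. I would first compute the coefficient for the unperturbed degenerate configuration, using the known normal form of circular billiards, which is integrable with twist. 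I would then show that the contribution of the free parameter $\kappa''$ enters linearly and nontrivially, so that a suitable choice makes the coefficient nonzero while all other constraints are preserved. For the strictly convex case (c) the free choice is restricted by convexity, so I would verify that an open set of admissible $\kappa''$ still remains.
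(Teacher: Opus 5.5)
There is a genuine gap in your first step, and the rest of the scheme depends on it. The base orbit you describe is not parabolic. An orbit of the folded flower table that hits the chord at normal incidence on the symmetry axis runs along the axis, so it passes through the centre and hits the arc normally as well. It is therefore the $2$-periodic orbit, forcing $n=1$. Its half-trace is $1-2\tau/r=-1-2d/r<-1$, where $\tau=r+d$ and $d>0$ is the distance from the centre to the chord.

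More generally, any orbit reflecting off the chord transversally is hyperbolic. Unfold across the chord: the free path from one arc to the other exceeds the circle's chord length $2r\cos\varphi$ by a positive amount. This is exactly the defocusing mechanism that makes the folded flower a Bunimovich table. The parabolic periodic orbits are the polygon orbits of the circle that miss the chord altogether, and a $\Gamma$ confined to a thin strip does not affect them.

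Step 2 then fails as well. If the reflection off $\Gamma$ is at incidence bounded away from grazing, the factor $2\kappa/\cos\varphi$ is $O(\Delta)$, so the monodromy moves by only $O(\Delta)$. An orbit whose trace is a fixed distance from $[-2,2]$ stays hyperbolic under such a change. For the same reason, your claims "trace $=2+O(\Delta)$" and "rotation number $O(\sqrt{\Delta})$" have no valid base to rest on.

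The missing idea is near-grazing amplification at the edge of the family of parabolic polygon orbits. The paper does this in five moves:
\begin{enumerate}
\item It takes $H=2r\sin(\pi/N)$, so that one side of the regular $N$-gon lies exactly on the chord; these values form the set $S$.
\item It tilts the orbit by $\varepsilon$ while keeping the horizontal symmetry, so the broken side enters a strip of width $\Delta=r\sin\alpha_1\tan\varepsilon$.
\item It places $\Gamma$ so that the two broken segments reflect at the symmetric points $\Gamma_0$ into a vertical segment of length $\tau_0$. This gives an $(N+2)$-periodic orbit with two reflections off $\Gamma$, not one.
\item At $\Gamma_0$ one has $\cos\varphi_0=\sin(\varepsilon/2)$. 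Hence $\mathcal{R}_0=2\mathcal{K}_0/\sin(\varepsilon/2)$ is of order one even though $\mathcal{K}_0=O(\varepsilon)$.
\item The trace is then a quadratic in $\mathcal{R}_0$ that sweeps all of $(-2,2)$ on explicit intervals (Proposition~\ref{prop_trM_vs_R0}).
\end{enumerate}

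This explicit control is what yields the three variants. The positive-curvature cases use the interval $(R_1,R_1+2/\tau_0)$. The strictly convex case needs the regime $R_2<0$ together with $\tau_0$ roughly between $h/2$ and $2h/3$, so that a circular arc with curvature $\mathcal{K}_\star=-2\sin(\varepsilon/2)/\tau_0$ through both points $\Gamma_0$ is admissible. It also needs a global $\Gamma$ that does not meet $\gamma$ again; a full circular arc would. Your remark "if the signs do not cooperate, take another family" does not establish any of this.

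Your third step, using the jet of $\Gamma$ at the reflection point to make the first Birkhoff coefficient nonzero, is in line with the paper's appeal to small $C^5$ perturbations. It only becomes relevant, though, once ellipticity has been obtained through the near-grazing mechanism.
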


The existence of $\Gamma$ with all its properties
claimed in Theorem~\ref{thm_main} is stable under small $C^5$ perturbations
of $\Gamma$,
so that we obtain an open set (in the $C^5$ topology) of billiard
tables that contains arbitrarily small (in the sense of curvature)
perturbations of the folded flower table which admit nonlinearly
stable periodic orbits.

The remainder of the paper establishes a proof of Theorem~\ref{thm_main}.
In Section~\ref{sect_construction} we describe the general part of
our construction of $\Gamma$ and the choice of the special
values of $H$ given by $S$ in Theorem~\ref{item1_thm_main} that give
rise to a special periodic orbit $\gamma$.
In Section~\ref{sect_stability} we derive a criterion for the
stability of the periodic orbit $\gamma$.
Both together will provide a proof for Item~(\ref{item1_thm_main}) of
Theorem~\ref{thm_main}. It will become clear that the construction used
does not depend on any symmetry assumptions and is able to produces
asymmetric boundary components $\Gamma$ for which the resulting
billiard table admits stable periodic orbits that are not symmetric either.
For simplicity, however, we shall only consider the more restrictive case
of symmetric $\Gamma$ in detail.

The final Section~\ref{sect_examples} provides examples of boundary
components that satisfy additional properties, and thus completes
the proof of Theorem~\ref{thm_main}. That section is
split into two parts.

In the first part, Section~\ref{sect_examples_pos},
we describe constructions of smooth $\Gamma$ such that
the stable periodic orbit $\gamma$ reflects off $\Gamma$
only in places where the curvature is strictly positive. As will become
clear, our construction will require $\Gamma$ to also
have parts with negative curvature to retain the smoothness of
$\Gamma$. If only a piece-wise smooth $\Gamma$
is required, then we will construct examples of such $\Gamma$
that have positive curvature only.
This latter result is in stark contrast with the hyperbolicity
of perturbations to moon billiards as established numerically in
\cite{MR3456006}.

The second part, found in Section~\ref{sect_examples_convex},
describes the construction of smooth components $\Gamma$ that are
strictly convex.
This result is in stark contrast with the hyperbolicity
of perturbations of the folded flower billiard
to lemon billiards established in \cites{MR3452271,MR4183371}.

\section{Construction of special periodic orbits}

\label{sect_construction}

Fix a circle of radius $r$ and
an integer $N\geq 3$, and consider an $N$-periodic orbit inside the
circle with one of its segments oriented so that it is vertical,
as indicated by the dotted polygon in Fig.~\ref{fig_setup} for
$N=5$.
\begin{figure}[!ht]
  \includegraphics[height=0.3\textheight]{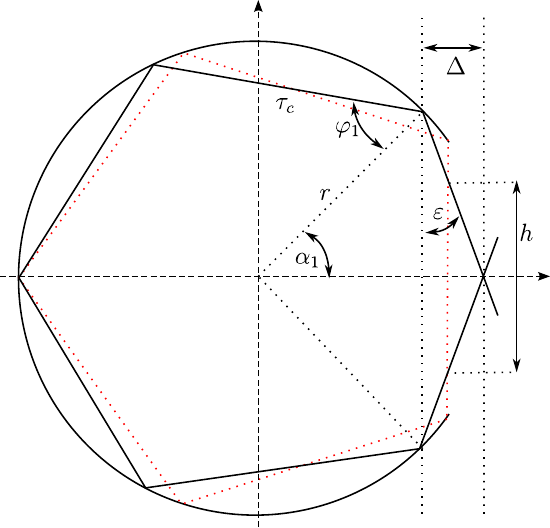}
  \includegraphics[height=0.3\textheight]{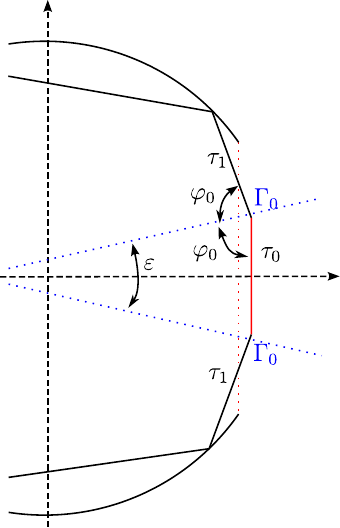}
  \caption{Construction of a billiard orbit segment (left figure)
  that will be used to produce an elliptic periodic orbit of period $N+2$
  (right figure).}
  \label{fig_setup}
\end{figure}
This vertical segment is where the place the cut to make the circle
into a folded flower billiard table. In particular
\begin{equation}
  \label{eqn_H}
  H = 2\,r \sin\frac{\pi}{N}
\end{equation}
is the value of $H$ for that folded flower table.
The collection of all such values of $H$ indexed by $N \geq 3$ provides
the countably infinite set $S$ claimed in Theorem~\ref{thm_main}.

Increasing the angle of reflection while keeping the
symmetry about the horizontal axis results in a billiard orbit segment
having $N$ reflections off the circle as
shown in Fig.~\ref{fig_setup}. In particular, the initially vertical
orbit segment is broken up and is no longer vertical. The resulting angle
with the vertical axis will be denoted by $\varepsilon$. Note that $\varepsilon>0$.
Due to the symmetry the two orbit segments that resulted from
breaking up the initially vertical segment intersect exactly
on the horizontal symmetry axis. The horizontal distance between
that intersection point and the first point of reflection on
the circular arc will be denoted by $\Delta$. Those two orbit segments
also intersect the initially vertical segment. The distance
between these two intersection points will be denoted by $h$.
Furthermore, the polar angle corresponding to the first
reflection off the circular arc is denoted by $\alpha_1$,
and the corresponding angle of reflection is denoted by
$\varphi_1$.

The symmetry about the horizontal axis then requires that
$\varepsilon$, $\varphi_1$ and $\alpha_1$ are related by
$ 2 \alpha_1 + (N-1)(\pi - 2\,\varphi_1) = 2\pi $
and
$ \alpha_1 + \varphi_1 + \frac{\pi}{2} - \varepsilon = \pi $,
i.e.
$\alpha_1 = \frac{\pi}{2} - \varphi_1 + \varepsilon $.
Therefore we can use $\varepsilon$ to express $\alpha_1$ and $\varphi_1$
as follows:
\begin{equation}
  \label{eqn_phi1_alpha1}
  \alpha_1 = \frac{\pi}{N} + \frac{N-1}{N}\,\varepsilon
  \;,\quad
  \frac{\pi}{2} - \varphi_1 = \frac{\pi}{ N } - \frac{1}{N}\,\varepsilon
  \;.
\end{equation}
For the length $\tau_c$ of the free path along the circular arc
we have $\tau_c = 2\,r \cos\varphi_1$, and hence
\begin{equation}
  \label{eqn_tc}
  \tau_c
  =
  2\,r \sin\Big( \frac{\pi}{ N } - \frac{1}{N}\,\varepsilon \Big)
  =
  2\,r \sin\frac{\pi}{ N } + \operatorname{\mathcal{O}}(\varepsilon)
\end{equation}
follows.
The expressions for $\Delta$
\begin{equation}
  \label{eqn_Delta}
  \Delta = r \sin\alpha_1\,\tan\varepsilon
\end{equation}
and $h$
\begin{equation}
  \label{eqn_h}
  h
  =
  2\,r \sin\alpha_1
  -
  2\,r\, \frac{\cos\frac{\pi}{N} - \cos\alpha_1}{ \tan\varepsilon }
  =
  \frac{1}{N}\,\tau_c + \operatorname{\mathcal{O}}(\varepsilon)
  =
  \frac{1}{N}\,2\,r \sin\frac{\pi}{ N } + \operatorname{\mathcal{O}}(\varepsilon)
\end{equation}
are both readily obtained from Fig.~\ref{fig_setup}.

In order to generate a periodic orbit $\gamma$ we add a new boundary component
$\Gamma$ such that the two orbit segments that broke
off the initially vertical segment intersect $\Gamma$.
We may as well make $\Gamma$ symmetric about the horizontal
symmetry axis. Denote the two points of intersection by
$\Gamma_0$ (they are symmetric about the horizontal axis,
so we use the same symbol to denote both) as shown in
Fig.~\ref{fig_setup}. Furthermore, we make the tangent line at
$\Gamma_0$ such that the orbit segments will reflect
off $\Gamma_0$ at an angle $\varphi_0$ that will
produce a vertical outgoing orbit segment. Thus we obtain a
periodic orbit $\gamma$ of period $N+2$, provided that $\Gamma$
is also chosen to not introduce any additional intersection
(would-be reflections) with $\gamma$.

We denote the free path from $\Gamma_0$ to the circular arc
by $\tau_1$, and we use $\tau_0$ to denote the free path
between $\Gamma_0$ and its symmetric copy, which corresponds
to the length of the vertical orbit segment of $\gamma$ connecting
$\Gamma_0$ and its symmetric copy.

It is evident from this construction of the $N+2$ periodic orbit,
see also the illustration shown in Fig.~\ref{fig_setup}, that
$\varphi_1 + \varphi_0 + \alpha_1 - \frac{1}{2}\varepsilon = \pi$.
Thus, using \eqref{eqn_phi1_alpha1} it follows that
\begin{equation}
  \label{eqn_phi0}
  \frac{\pi}{2} - \varphi_0 = \frac{1}{2}\varepsilon
\end{equation}
as indicated in Fig.~\ref{fig_setup}.

There is, however, a relation between $\tau_0$, $\tau_1$
and the value of $\varepsilon$. Clearly, given $\varepsilon$ the value of
$\tau_0$ can be anywhere in the range:
\begin{equation}
  \label{eqn_rangle_tau0}
  0 < \tau_0 < 2\,r \sin\alpha_1
\end{equation}
And for any such choice of $\tau_0$ we then have fixed the location
of $\Gamma_0$. The corresponding normal line must always have
angle $\frac{\varepsilon}{2}$ with the horizontal axis.
The value of $\tau_1$ is then also fixed, and must satisfy
the relation
\begin{equation*}
  r \sin\alpha_1 - \frac{1}{2} \tau_0
  =
  \tau_1 \cos\varepsilon
\end{equation*}
as can be readily seen from Fig.~\ref{fig_setup}. Therefore, we have
\begin{equation}
  \label{eqn_tau1}
  \tau_1
  =
  \frac{ 2\,r \sin\alpha_1 - \tau_0 }{ 2 \cos\varepsilon }
\end{equation}
for $\tau_1$ as a function of $\varepsilon$ and $\tau_0$.

To summarize the above construction, fix any $N \geq 3$
and any (small) value of $\varepsilon>0$. For any choice of
$\tau_0$ in the admissible range \eqref{eqn_rangle_tau0}
there exists an $N+2$ periodic orbit $\gamma$,
that can be chosen to be symmetric
about the horizontal axis, as shown in Fig.~\ref{fig_setup}.
Notice that only the location of $\Gamma_0$ (fixed
by the choice of $\tau_0$), and the corresponding normal line
(whose angle with the horizontal axis is $\frac{\varepsilon}{2}$)
are fixed by the above construction. The local (and global) geometry
of the boundary component $\Gamma$ near $\Gamma_0$
does not affect the periodic orbit $\gamma$, as long as $\Gamma$
does not have other intersections with $\gamma$ (which would prevent
$\gamma$ from existing as described in the above construction).

This establishes the countably infinite set $S$ in
Theorem~\ref{thm_main}, and the existence $\Gamma$ that
gives rise to a periodic orbit $\gamma$. The corresponding stability
claim of Item~(\ref{item1_thm_main}) in Theorem~\ref{thm_main} will be
addressed in the next section.

\section{Stability of the periodic orbit}
\label{sect_stability}

Fix $r$, $N$, $\varepsilon$, $\tau_0$ and consider the periodic orbit $\gamma$
as constructed in Section~\ref{sect_construction}. Recall that the
construction of $\gamma$ described above does not determine the
boundary component $\Gamma$ globally. Only conditions on
the local geometry of any such $\Gamma$ are specified.
Let ${\mathcal K}_0$ denote the curvature of (any one such)
$\Gamma$ at $\Gamma_0$.
While the global description of $\Gamma$ is essential in
order to prove existence of $\gamma$ as described in
Section~\ref{sect_construction}, it is only the local geometry that
matters for the stability analysis of $\gamma$. This will be carried
out in this section, while we postpone the global description of
$\Gamma$ until the next section.

The linearization of the billiard flow,
see \cite{MR2229799} for details, from right before the
first reflection off the circular arc to right after the $N$-th reflection
off the circular arc will be denoted by $L_c$ and is given by
\begin{equation*}
  L_c
  =
  \Big[
  \begin{pmatrix}
    -1 & 0 \\
    \frac{4}{\tau_c} & -1
  \end{pmatrix}
  \begin{pmatrix}
    1 & \tau_c \\
    0 & 1
  \end{pmatrix}
  \Big]^N
  \begin{pmatrix}
    -1 & 0 \\
    \frac{4}{\tau_c} & -1
  \end{pmatrix}
  \;.
\end{equation*}
It is not difficult to establish that
\begin{equation}
  \label{eqn_Lc}
  L_c
  =
  \begin{pmatrix}
    1 - 2\,N & (N-1)\,\tau_c \\
    \frac{4\,N}{\tau_c} & 1 - 2\,N
  \end{pmatrix}
\end{equation}
holds.

Therefore, the linearization of the billiard flow around the entire periodic
orbit $\gamma$, which will be denoted by $M$, is given by
\begin{equation}
  \label{eqn_M}
  M
  =
  L_c
  \begin{pmatrix}
    1 & \tau_1 \\
    0 & 1
  \end{pmatrix}
  \begin{pmatrix}
    1 & 0 \\
    {\mathcal R}_0 & 1
  \end{pmatrix}
  \begin{pmatrix}
    1 & \tau_0 \\
    0 & 1
  \end{pmatrix}
  \begin{pmatrix}
    1 & 0 \\
    {\mathcal R}_0 & 1
  \end{pmatrix}
  \begin{pmatrix}
    1 & \tau_1 \\
    0 & 1
  \end{pmatrix}
  \;,\quad
  {\mathcal R}_0
  =
  \frac{2\,{\mathcal K}_0}{\cos\varphi_0}
  \;,
\end{equation}
where the post-collisional state of the last reflection off the
circular arc was taken as the initial point for linearization of the
billiard flow along $\gamma$.

The stability type (elliptic, parabolic, hyperbolic) of $\gamma$
is determined by the trace of $M$.
In particular, $\gamma$ is elliptic if $-1 < \frac{1}{2} \operatorname{tr} M < 1$.
A straightforward calculation using the explicit formula
\eqref{eqn_M} for $M$ shows the following result:
\begin{lemma}
  \label{lem_trM2}
  The trace $\operatorname{tr} M$ of $M$ is given by
  \begin{equation*}
    \begin{split}
      \frac{1}{2} \operatorname{tr} M
      &=
      1
      +
      \frac{1}{2\,\tau_c}
      [
      2\,N
      -
      (
      (N-1)\,\tau_c - 2\,N\,\tau_1
      )\,{\mathcal R}_0
      ]\,[
      2\,( 2\,\tau_1 + \tau_0 - \tau_c )
      -
      ( \tau_c - 2\,\tau_1 )\,\tau_0\,{\mathcal R}_0
      ]
      \\
      &=
      -
      1
      +
      \frac{1}{2\,\tau_c}
      [
      2
      -
      ( \tau_c - 2\,\tau_1 )\,{\mathcal R}_0
      ]\,[
      2\,( \tau_c + N\,( 2\,\tau_1 + \tau_0 - \tau_c ) )
      -
      ( (N-1)\,\tau_c - 2\,N\,\tau_1 )\,\tau_0\,{\mathcal R}_0
      ]
      \;.
    \end{split}
  \end{equation*}
\end{lemma}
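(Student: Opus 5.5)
The plan is to compute $\operatorname{tr} M$ directly from \eqref{eqn_M}, using cyclic invariance of the trace to put the product in a symmetric form. Write $T(s)=\begin{pmatrix}1&s\\0&1\end{pmatrix}$ and $R=\begin{pmatrix}1&0\\ \mathcal R_0&1\end{pmatrix}$. Then
\begin{equation*}
\operatorname{tr} M = \operatorname{tr}\big( L_c\, T(\tau_1) R\, T(\tau_0) R\, T(\tau_1)\big) = \operatorname{tr}\big( T(\tau_1) L_c T(\tau_1)\; R\, T(\tau_0) R\big).
\end{equation*}
First compute $A = T(\tau_1) L_c T(\tau_1)$ from \eqref{eqn_Lc}. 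Its entries are
\begin{equation*}
A_{11}=1-2N+\tfrac{4N\tau_1}{\tau_c},
\qquad
A_{21}=\tfrac{4N}{\tau_c},
\qquad
A_{22}=1-2N+\tfrac{4N\tau_1}{\tau_c},
\end{equation*}
\begin{equation*}
A_{12}=(N-1)\tau_c+2(1-2N)\tau_1+\tfrac{4N\tau_1^2}{\tau_c}.
\end{equation*}
Next compute $B = R\,T(\tau_0)\,R$, whose entries are
\begin{equation*}
B_{11}=B_{22}=1+\tau_0\mathcal R_0,
\qquad
B_{12}=\tau_0,
\qquad
B_{21}=2\mathcal R_0+\tau_0\mathcal R_0^2.
\end{equation*}
The trace is then $\operatorname{tr}(AB)=A_{11}B_{11}+A_{12}B_{21}+A_{21}B_{12}+A_{22}B_{22}$. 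This is a polynomial of degree two in $\mathcal R_0$, with coefficients rational in $\tau_c$ and polynomial in $\tau_0,\tau_1$.

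The second step is to verify that this polynomial equals the first claimed factorization. To do this, expand $1+\frac{1}{2\tau_c}[\dots][\dots]$ and compare coefficients of $1$, $\mathcal R_0$ and $\mathcal R_0^2$.

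A useful check on the $\mathcal R_0^2$ coefficient is that $A_{12}\tau_0$ should equal $\frac{\tau_0}{2\tau_c}\big((N-1)\tau_c-2N\tau_1\big)(\tau_c-2\tau_1)$. Indeed $\big((N-1)\tau_c-2N\tau_1\big)(\tau_c-2\tau_1)=(N-1)\tau_c^2-2(2N-1)\tau_c\tau_1+4N\tau_1^2$, which is $\tau_c A_{12}$ divided by one. Note that $\tau_0$ appears with factor $\tfrac12$ because $\operatorname{tr}$ is halved. The constant and linear terms are matched the same way. The key identity behind the factorization is $\det A=1$, which forces $\operatorname{tr}(AB)-2$ to factor nicely.

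The second form of the lemma follows from the first by using $\tfrac12\operatorname{tr}M-1$ versus $\tfrac12\operatorname{tr}M+1$. Both are quadratics in $\mathcal R_0$, and they differ by $2$. It therefore suffices to expand both products and check that their difference is $\frac{1}{2\tau_c}\cdot(-4\tau_c)=-2$. Equivalently, check that
\begin{equation*}
[2N-a\mathcal R_0][2b-c\tau_0\mathcal R_0]-[2-c\mathcal R_0][2(\tau_c+Nb)-a\tau_0\mathcal R_0]=4\tau_c,
\end{equation*}
where $a=(N-1)\tau_c-2N\tau_1$, $b=2\tau_1+\tau_0-\tau_c$ and $c=\tau_c-2\tau_1$. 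The $\mathcal R_0^2$ terms cancel identically. The constant terms give $4Nb-4\tau_c-4Nb=-4\tau_c$, so the sign of this identity has to be tracked carefully. The $\mathcal R_0$ terms reduce to the linear identity $-2ab-2Nc\tau_0+2c(\tau_c+Nb)+2a\tau_0=0$, which is then verified by substituting $a,b,c$.

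The main obstacle is purely bookkeeping: getting the signs and the factor conventions right, in particular the orientation convention for $L_c$ and $\mathcal R_0$. I would first sanity-check the result in the degenerate case $\mathcal R_0=0$, $\tau_0+2\tau_1=\tau_c$, which recovers the circle with $N+1$ reflections. There the trace should be $\operatorname{tr}$ of $L_c T(\tau_c)$, and the formula gives $\tfrac12\operatorname{tr}M=1$, the parabolic value, as expected.
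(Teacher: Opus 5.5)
Your proposal is correct and is the same direct computation the paper relies on; the paper only says the lemma follows from a straightforward calculation with \eqref{eqn_M} and \eqref{eqn_Lc}. Your entries of $A=T(\tau_1)L_cT(\tau_1)$ and $B=RT(\tau_0)R$ are right, and the coefficient matching goes through, with the linear terms reducing to $b+c=\tau_0$ and $a+\tau_c=Nc$ in your notation. Fix three bookkeeping slips: the $\mathcal R_0^2$ check should read $\tfrac12\tau_0A_{12}=\frac{\tau_0}{2\tau_c}ac$ (i.e.\ $\tau_cA_{12}=ac$); the bracket difference must equal $-4\tau_c$, not $4\tau_c$; and the degenerate case $\mathcal R_0=0$, $2\tau_1+\tau_0=\tau_c$ gives $M=L_cT(\tau_c)$, the $N$-periodic circle orbit with $N$ (not $N+1$) reflections.
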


In the following we are interested in
finding conditions on ${\mathcal R}_0$ such that the periodic orbit $\gamma$
is elliptic, i.e. $-1 < \frac{1}{2} \operatorname{tr} M < 1$ holds. For this purpose
we note the following observation:
\begin{lemma}
  \label{lem_t0_2t1_tc}
  The three free paths $\tau_0$, $\tau_1$, $\tau_c$
  satisfy
  \begin{equation*}
    2\,\tau_1 + \tau_0 - \tau_c
    =
    \Big(\frac{1}{ \cos\varepsilon } - 1 \Big)
    \,[ 2\,r \sin\alpha_1 - \tau_0 ]
    + 2\,r\,[ \sin\alpha_1 - \sin(\alpha_1 - \varepsilon) ]
    >
    0
  \end{equation*}
  for all $\varepsilon>0$ (small) and all admissible $\tau_0$.
  In particular,
  $2\,\tau_1 + \tau_0 - \tau_c = \operatorname{\mathcal{O}}(\varepsilon)$
  holds uniformly in $\varepsilon$.
\end{lemma}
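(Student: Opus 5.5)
The identity comes from substituting the explicit formulas of Section~\ref{sect_construction} for $\tau_1$ and $\tau_c$ and grouping terms. Positivity then follows from elementary inequalities.

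\textbf{Step 1 (identity).} By \eqref{eqn_tau1},
\[
2\,\tau_1 + \tau_0 = \frac{2\,r\sin\alpha_1 - \tau_0}{\cos\varepsilon} + \tau_0 .
\]
Adding and subtracting $2\,r\sin\alpha_1 - \tau_0$ gives
\[
2\,\tau_1 + \tau_0 = \Big(\frac{1}{\cos\varepsilon}-1\Big)[2\,r\sin\alpha_1-\tau_0] + 2\,r\sin\alpha_1 .
\]

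\textbf{Step 2 (rewrite $\tau_c$).} By \eqref{eqn_tc}, $\tau_c = 2\,r\sin(\frac{\pi}{N}-\frac{1}{N}\varepsilon)$. By \eqref{eqn_phi1_alpha1},
\[
\alpha_1-\varepsilon = \frac{\pi}{N}+\frac{N-1}{N}\varepsilon-\varepsilon = \frac{\pi}{N}-\frac{1}{N}\varepsilon ,
\]
so $\tau_c = 2\,r\sin(\alpha_1-\varepsilon)$. Subtracting this from the expression in Step 1 gives exactly the claimed identity.

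\textbf{Step 3 (positivity).} The first term is nonnegative. Indeed, $0<\varepsilon<\pi/2$ gives $1/\cos\varepsilon-1>0$, and admissibility \eqref{eqn_rangle_tau0} gives $2\,r\sin\alpha_1-\tau_0>0$, so this term is in fact strictly positive.

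The second term is also strictly positive. For $N\ge 3$ and small $\varepsilon>0$ we have
\[
0 < \alpha_1-\varepsilon < \alpha_1 = \frac{\pi}{N}+\frac{N-1}{N}\varepsilon < \frac{\pi}{2} .
\]
Since $\sin$ is strictly increasing on $[0,\pi/2]$, it follows that $\sin\alpha_1>\sin(\alpha_1-\varepsilon)$. Hence the whole expression is strictly positive.

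\textbf{Step 4 (order in $\varepsilon$).} The first term is bounded by $(1/\cos\varepsilon-1)\,2r = \mathcal{O}(\varepsilon^2)$, uniformly in $\tau_0$, because $0<2\,r\sin\alpha_1-\tau_0\le 2r$. By the mean value theorem the second term is at most $2r\varepsilon$. Therefore $2\,\tau_1+\tau_0-\tau_c=\mathcal{O}(\varepsilon)$ uniformly in the admissible $\tau_0$.

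The only point requiring care is recognizing in Step 2 that $\tau_c$ can be written as $2r\sin(\alpha_1-\varepsilon)$; everything else is routine.
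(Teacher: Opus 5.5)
Your proof is correct and follows the paper's argument. Like the paper, you substitute \eqref{eqn_tau1}, rewrite $\tau_c = 2\,r\cos\varphi_1 = 2\,r\sin(\alpha_1-\varepsilon)$ via \eqref{eqn_phi1_alpha1}, and get positivity from \eqref{eqn_rangle_tau0} and the monotonicity of $\sin$; your Step~4 (first term at most $2r(1/\cos\varepsilon-1)$, second at most $2r\varepsilon$ by the mean value theorem) supplies the uniform $\mathcal{O}(\varepsilon)$ estimate that the paper only asserts.
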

\begin{proof}
  By
  \eqref{eqn_tau1}
  and the definition of $\tau_c$ we have
  \begin{equation*}
    2\,\tau_1 + \tau_0 - \tau_c
    =
    \frac{ 2\,r \sin\alpha_1 - \tau_0 }{ \cos\varepsilon }
    + \tau_0
    - 2\,r\cos\varphi_1
    \;.
  \end{equation*}
  Recall also that by \eqref{eqn_phi1_alpha1}
  $ \frac{\pi}{2} - \varphi_1 = \alpha_1 - \varepsilon $, so that
  $\cos\varphi_1 = \sin(\alpha_1 - \varepsilon)$, and therefore
  \begin{equation*}
    2\,\tau_1 + \tau_0 - \tau_c
    =
    \frac{ 2\,r \sin\alpha_1 - \tau_0 }{ \cos\varepsilon }
    + \tau_0
    - 2\,r \sin(\alpha_1 - \varepsilon)
  \end{equation*}
  holds, which can be readily seen to be equivalent to the claimed
  expression for
  $2\,\tau_1 + \tau_0 - \tau_c$.

  Now, by the restriction \eqref{eqn_rangle_tau0} on the range
  for $\tau_0$ it follows that the first term on the right-hand-side
  of the claimed expression for
  $2\,\tau_1 + \tau_0 - \tau_c$
  is positive. The second term is obviously also positive for
  any $\varepsilon>0$ (small).
\end{proof}

It is evident from Lemma~\ref{lem_trM2} that $\operatorname{tr} M$ is a quadratic
function in ${\mathcal R}_0$.
Using the factorization of $\frac{1}{2} \operatorname{tr} M \pm 1$
derived in Lemma~\ref{lem_trM2}
we can readily obtain the solutions ${\mathcal R}_0$ to
$\frac{1}{2} \operatorname{tr} M = \pm 1$.
In light of Lemma~\ref{lem_t0_2t1_tc}
we express these as follows:
\begin{equation}
  \label{eqn_R0_trM_plus2}
  \frac{1}{2} \operatorname{tr} M = 1
  \iff
  {\mathcal R}_0
  =
  R_1
  \;,\quad
  {\mathcal R}_0
  =
  R_2
\end{equation}
\begin{equation}
  \label{eqn_R0_trM_minus2}
  \frac{1}{2}\operatorname{tr} M = -1
  \iff
  {\mathcal R}_0
  =
  R_1 + \frac{2}{\tau_0}
  \;,\quad
  {\mathcal R}_0
  =
  R_2
  -
  \frac{2}{\tau_0}
\end{equation}
where we set
\begin{equation*}
  R_1
  =
  \frac{2}{ \tau_0 - (2\,\tau_1 + \tau_0 - \tau_c) }
  -
  \frac{2}{\tau_0}
  \quad\text{and}\quad
  R_2
  =
  \frac{2
  }{
  \tau_0 - \frac{1}{N}\,\tau_c
  -
  (2\,\tau_1 + \tau_0 - \tau_c)
  }
  \;.
\end{equation*}

Notice that the term in $\frac{1}{2}\operatorname{tr} M$
that is proportional to ${\mathcal R}_0^2$ has coefficient
\begin{equation}
  \label{eqn_coeff_R02}
  \frac{N \,\tau_0 }{2\,\tau_c}
  \,[ \tau_0 - ( 2\,\tau_1 + \tau_0 - \tau_c ) ]
  \,\Big[
  \tau_0
  -
  \frac{1}{N}\,\tau_c
  -
  ( 2\,\tau_1 + \tau_0 - \tau_c )
  \Big]
  =
  \frac{2\,N \,\tau_0 }{\tau_c}
  \,\frac{1}{R_1 + \frac{2}{\tau_0}}
  \,\frac{1}{R_2}
\end{equation}
as is readily seen from Lemma~\ref{lem_trM2}.
Combining
\eqref{eqn_R0_trM_plus2}, \eqref{eqn_R0_trM_minus2}, \eqref{eqn_coeff_R02}
readily shows the following result:

\begin{proposition}
  \label{prop_trM_vs_R0}
  Assume that
  \begin{equation*}
    2\,\tau_1 + \tau_0 - \tau_c
    <
    \tau_0
    \;.
  \end{equation*}
  (Recall that 
  $
  0 <
  2\,\tau_1 + \tau_0 - \tau_c
  = \operatorname{\mathcal{O}}(\varepsilon)
  $
  as was shown in Lemma~\ref{lem_t0_2t1_tc}.)
  Then the following hold:
  \begin{enumerate}
    \item
      \label{item_1_prop_trM_vs_R0}
      The inequality
      \begin{equation*}
 0 < R_1
 =
 \frac{2}{\tau_0}
 \frac{
 (2\,\tau_1 + \tau_0 - \tau_c)
 }{
 \tau_0
 -
 (2\,\tau_1 + \tau_0 - \tau_c)
 }
 =
 \operatorname{\mathcal{O}}(\varepsilon)
      \end{equation*}
      holds.
      Furthermore,
      as ${\mathcal R}_0$ increases from $R_1$ to $R_1 + \frac{2}{\tau_0}$
      the trace of $M$ decreases from $2$ to $-2$.
    \item
      \label{item_2_prop_trM_vs_R0}
      If
      $
      \tau_0
      <
      \frac{1}{N}\,\tau_c
      +
      (2\,\tau_1 + \tau_0 - \tau_c)
      $, i.e. if $R_2 < 0$,
      then ${\mathcal R}_0 \mapsto \operatorname{tr} M$ is a downward pointing parabola,
      and
      as ${\mathcal R}_0$ increases from $R_2 - \frac{2}{\tau_0}$ to $R_2$
      the trace of $M$ increases from $-2$ to $2$.
    \item
      \label{item_3_prop_trM_vs_R0}
      If
      $
      \frac{1}{N}\,\tau_c
      +
      (2\,\tau_1 + \tau_0 - \tau_c)
      <
      \tau_0
      $, i.e. if $R_2 > 0$, then
      then ${\mathcal R}_0 \mapsto \operatorname{tr} M$ is an upward pointing parabola,
      and
      \begin{equation*}
 0
 <
 R_2 - \frac{2}{\tau_0}
 =
 \frac{2}{\tau_0}
 \frac{
 \frac{1}{N}\,\tau_c
 +
 (2\,\tau_1 + \tau_0 - \tau_c)
 }{
 \tau_0 - \frac{1}{N}\,\tau_c
 -
 (2\,\tau_1 + \tau_0 - \tau_c)
 }
 =
 \frac{2}{\tau_0}
 \frac{
 \frac{1}{N}\,\tau_c
 }{
 \tau_0 - \frac{1}{N}\,\tau_c
 }
 +
 \operatorname{\mathcal{O}}(\varepsilon)
 \;,
      \end{equation*}
      and
      as ${\mathcal R}_0$ increases from $R_2 - \frac{2}{\tau_0}$ to $R_2$
      the trace of $M$ increases from $-2$ to $2$.
  \end{enumerate}
\end{proposition}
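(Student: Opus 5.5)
The plan is to treat $f(\mathcal R_0)=\frac12\operatorname{tr}M$ as a quadratic polynomial in $\mathcal R_0$. Its roots of $f=1$ and $f=-1$ are already known from \eqref{eqn_R0_trM_plus2} and \eqref{eqn_R0_trM_minus2}, and its leading coefficient is \eqref{eqn_coeff_R02}. Abbreviate $\delta=2\tau_1+\tau_0-\tau_c$, so $0<\delta=\mathcal O(\varepsilon)$ by Lemma~\ref{lem_t0_2t1_tc}, and assume $\delta<\tau_0$.

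First I would simplify $R_1$ by putting it over a common denominator:
\[
R_1=\frac{2}{\tau_0-\delta}-\frac{2}{\tau_0}=\frac{2}{\tau_0}\,\frac{\delta}{\tau_0-\delta}.
\]
This is positive because $0<\delta<\tau_0$, and it is $\mathcal O(\varepsilon)$ because $\delta=\mathcal O(\varepsilon)$ and $\tau_0$ is fixed. In the same way, $R_2-\frac{2}{\tau_0}=\frac{2}{\tau_0}\,\frac{\tau_c/N+\delta}{\tau_0-\tau_c/N-\delta}$. Expanding in $\delta$ gives the stated leading term plus $\mathcal O(\varepsilon)$, and this is positive whenever $R_2>0$.

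Next I would fix the sign of the leading coefficient using \eqref{eqn_coeff_R02}. It equals $\frac{2N\tau_0}{\tau_c}\cdot\frac{1}{R_1+2/\tau_0}\cdot\frac{1}{R_2}$, and $R_1+2/\tau_0=2/(\tau_0-\delta)>0$. So the coefficient has the sign of $R_2$. The parabola opens downward when $R_2<0$ and upward when $R_2>0$, which gives the parabola claims in items (\ref{item_2_prop_trM_vs_R0}) and (\ref{item_3_prop_trM_vs_R0}).

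The monotonicity claims come from locating the four roots relative to one another, using the fact that a quadratic is monotone on any interval that avoids its vertex. The values $f=1$ and $f=-1$ cannot both be taken on one side of the vertex unless the order of the roots forces it, so I would use the following structure. The roots of $f=1$ are $R_1,R_2$, and the roots of $f=-1$ are $R_1+\frac2{\tau_0}$ and $R_2-\frac2{\tau_0}$. The midpoints of the two root pairs coincide, at $(R_1+R_2)/2$, which is the vertex.

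\emph{Case $R_2>0$ (upward parabola).} The level set $f=-1$ lies below the level set $f=1$, so its roots are nested strictly inside $[\min(R_1,R_2),\max(R_1,R_2)]$. Since $R_2-\frac2{\tau_0}>0$ and $R_1$ is small, I need to decide which of $R_1,R_2$ is larger, and I would check that $R_2>R_1+\frac2{\tau_0}$. This reduces to $\frac{2}{\tau_0-\tau_c/N-\delta}>\frac{2}{\tau_0-\delta}$, which holds because both denominators are positive. The ordering is then
\[
R_1<R_2-\tfrac{2}{\tau_0}<R_1+\tfrac{2}{\tau_0}<R_2.
\]
On the left branch $f$ decreases: it goes from $1$ at $R_1$ to $-1$ at $R_2-\frac2{\tau_0}$. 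On the right branch $f$ increases: it goes from $-1$ at $R_2-\frac2{\tau_0}$, through the vertex, to $1$ at $R_2$.

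Item (\ref{item_1_prop_trM_vs_R0}) needs care here, because $R_1+\frac2{\tau_0}$ lies beyond the vertex in this case. I would read it as: $R_1$ and $R_1+\frac{2}{\tau_0}$ are the adjacent roots, with $f=1$ at the first and $f=-1$ at the second, and $f$ passes through $(-1,1)$ on the first stretch of the interval. Checking whether a literal reading of ``decreases'' holds is the \textbf{main obstacle}, and I would verify it through the ordering above.

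\emph{Case $R_2<0$ (downward parabola).} The ordering is $R_2-\frac2{\tau_0}<R_2<0<R_1<R_1+\frac2{\tau_0}$. Here the vertex lies between $R_2$ and $R_1$. On $[R_2-\frac2{\tau_0},R_2]$ the function $f$ increases from $-1$ to $1$, and on $[R_1,R_1+\frac2{\tau_0}]$ it decreases from $1$ to $-1$. This is clean monotonicity, and item (\ref{item_1_prop_trM_vs_R0}) holds literally in this case.
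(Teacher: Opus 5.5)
Your route is the same as the paper's, which justifies the proposition only by combining \eqref{eqn_R0_trM_plus2}, \eqref{eqn_R0_trM_minus2} and \eqref{eqn_coeff_R02}. Several of your steps are correct: the simplifications of $R_1$ and $R_2-\frac{2}{\tau_0}$, the sign of the leading coefficient, the symmetry of both root pairs about the vertex $\frac{R_1+R_2}{2}$, and the whole case $R_2<0$.

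The gap is in the case $R_2>0$. The inequality $R_2>R_1+\frac{2}{\tau_0}$ that you check only says that both roots of $\frac12\operatorname{tr}M=-1$ lie in $(R_1,R_2)$. Since the two roots are mirror images about the vertex, it says nothing about which one is on the left. That is decided by the sign of $R_2-R_1-\frac{4}{\tau_0}$. Writing $\delta=2\tau_1+\tau_0-\tau_c$, one finds that $R_2-\frac{2}{\tau_0}<R_1+\frac{2}{\tau_0}$ holds if and only if $(\tau_0-\delta)(\tau_0-\delta-\frac{1}{N}\tau_c)>\frac{1}{N}\tau_0\tau_c$, which to leading order means $\tau_0>\frac{2}{N}\tau_c$. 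So the ordering $R_1<R_2-\frac{2}{\tau_0}<R_1+\frac{2}{\tau_0}<R_2$ that you assert is unjustified and holds only in that regime. Even there you draw the wrong conclusion from it. The vertex then lies between $R_2-\frac{2}{\tau_0}$ and $R_1+\frac{2}{\tau_0}$, so on $[R_2-\frac{2}{\tau_0},R_2]$ the trace first drops below $-2$ before rising to $2$. Item~(3) therefore fails there just as Item~(1) does, and your sentence that $f$ increases through the vertex contradicts itself.

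The obstacle you flagged is real, and reinterpreting Item~(1) does not remove it. In the case $R_2>0$ with $R_2-R_1<\frac{4}{\tau_0}$, one has $\operatorname{tr}M<-2$ on the whole interval $(R_2-\frac{2}{\tau_0},R_1+\frac{2}{\tau_0})$, so the monotonicity claims in Items~(1) and (3) are false. For a concrete instance take $N=3$, $r=1$, $\tau_0=\frac32$ and $\varepsilon\to0$. Then $R_1\to0$, $R_2-\frac{2}{\tau_0}\to0.834\ldots$ and $R_1+\frac{2}{\tau_0}\to\frac43$, while Lemma~\ref{lem_trM2} gives $\frac12\operatorname{tr}M\to-1.099\ldots$ at $\mathcal R_0=1$; by continuity the same holds for small $\varepsilon>0$. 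The paper's one-line justification does not address the relative order of these two roots either. So this is a defect in the statement itself, not something a sharper argument can repair.

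What your quadratic argument does prove for $R_2>0$ is the following. Set $m_-=\min\{R_1+\frac{2}{\tau_0},R_2-\frac{2}{\tau_0}\}$ and $m_+=\max\{R_1+\frac{2}{\tau_0},R_2-\frac{2}{\tau_0}\}$. Then the trace decreases from $2$ to $-2$ on $[R_1,m_-]$ and increases from $-2$ to $2$ on $[m_+,R_2]$. Items~(1) and (3) hold verbatim exactly when $R_2-R_1\geq\frac{4}{\tau_0}$, roughly $\frac{1}{N}\tau_c<\tau_0\leq\frac{2}{N}\tau_c$. You should state and prove that corrected version. It still suffices for Section~\ref{sect_examples_pos}, which only needs some $\mathcal R_0$ slightly larger than $R_1$. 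Section~\ref{sect_examples_convex} uses only the case $R_2<0$, which you handled correctly.
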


In order to have a geometric interpretation of the condition
$
\frac{1}{N}\,\tau_c
+
(2\,\tau_1 + \tau_0 - \tau_c)
<
\tau_0
$
that is invoked in Proposition~\ref{prop_trM_vs_R0}
observe that \eqref{eqn_h} combined with Lemma~\ref{lem_t0_2t1_tc}
shows that
\begin{equation}
  \label{eqn_cond_t0}
  \frac{1}{N}\,\tau_c
  +
  (2\,\tau_1 + \tau_0 - \tau_c)
  =
  \frac{1}{N}\,\tau_c + \operatorname{\mathcal{O}}(\varepsilon)
  =
  h + \operatorname{\mathcal{O}}(\varepsilon)
  =
  \frac{1}{N}\,2\,r \sin\frac{\pi}{N}
  +
  \operatorname{\mathcal{O}}(\varepsilon)
  \;.
\end{equation}
Thus we arrive at the following
observation:
\begin{remark}
  \label{rem_trM_vs_R0_cond}
  The geometric meaning of
  $
  \frac{1}{N}\,\tau_c
  +
  (2\,\tau_1 + \tau_0 - \tau_c)
  <
  \tau_0
  $
  is that $\tau_0$ is chosen such that
  it is (within an error of $\operatorname{\mathcal{O}}(\varepsilon)$)
  longer than the $\frac{1}{N}$--th part of the vertical reference
  orbit segment indicated by the dotted vertical red line in
  Fig.~\ref{fig_setup}, i.e. $\frac{1}{N}\,2\,r \sin\frac{\pi}{N}$.
  Equivalently, it means that $\tau_0$
  is chosen longer (within $\operatorname{\mathcal{O}}(\varepsilon)$) than $h$.
\end{remark}

We conclude that Proposition~\ref{prop_trM_vs_R0} provides explicit bounds
on the local geometry of $\Gamma$, indeed on
${\mathcal K}_0$, that guarantee that the periodic orbit
$\gamma$ is elliptic. Furthermore, \eqref{eqn_cond_t0} provides
a clear geometric meaning of the condition distinguishing
Item~(\ref{item_2_prop_trM_vs_R0}) and Item~(\ref{item_3_prop_trM_vs_R0})
in Proposition~\ref{item_2_prop_trM_vs_R0}.

This essentially completes the proof of
Item~(\ref{item1_thm_main}) of Theorem~\ref{item1_thm_main}. What is missing
still is the global construction of $\Gamma$ so that
$\gamma$ is indeed a periodic orbit on the resulting billiard table.
This will be carried out by means of specific examples for
$\Gamma$ in the next
section. This will also address the additional properties that
can be imposed on $\Gamma$ as listed in
Item~(\ref{item2_thm_main}) of Theorem~\ref{item2_thm_main}, and will
also establish the nonlinear stability of $\gamma$
and the non-vanishing of the first Birkhoff coefficient, and thereby
completing the proof of Theorem~\ref{thm_main}.

\section{Examples of boundary components with elliptic periodic orbits}
\label{sect_examples}

Fix $N$, $\varepsilon$, $\tau_0$ and consider the periodic orbit $\gamma$
as constructed in Section~\ref{sect_construction}.
Let ${\mathcal K}_0$ denote the curvature of $\Gamma$
at $\Gamma_0$.
Recall \eqref{eqn_phi0} and note that
\begin{equation*}
  {\mathcal R}_0
  =
  \frac{2\,{\mathcal K}_0}{\cos\varphi_0}
  =
  \frac{2\,{\mathcal K}_0}{\sin\frac{\varepsilon}{2} }
\end{equation*}
holds. In this section we give examples of boundary components
$\Gamma$ that have a uniformly small curvature
and are such that the resulting billiard table admits an
elliptic $N+2$--periodic orbit, as illustrated in Fig.~\ref{fig_setup}.

\subsection{Boundary components with positive curvature}
\label{sect_examples_pos}

By Item~(\ref{item_1_prop_trM_vs_R0}) of
Proposition~\ref{prop_trM_vs_R0}, if the boundary curvature ${\mathcal K}_0$
at $\Gamma_0$ is such that
$R_1 < {\mathcal R}_0 < R_1 + \frac{2}{\tau_0}$, then
the periodic orbit $\gamma$ is elliptic.
In more explicit terms this condition can be written as
\begin{equation*}
  0
  <
  \operatorname{\mathcal{O}}(\varepsilon)
  =
  \frac{2}{\tau_0}
  \frac{
  (2\,\tau_1 + \tau_0 - \tau_c)
  }{
  \tau_0
  -
  (2\,\tau_1 + \tau_0 - \tau_c)
  }
  <
  \frac{2\,{\mathcal K}_0}{\sin\frac{\varepsilon}{2} }
  <
  \frac{2}{ \tau_0 - (2\,\tau_1 + \tau_0 - \tau_c) }
  =
  \frac{2}{ \tau_0 } + \operatorname{\mathcal{O}}(\varepsilon)
  \;.
\end{equation*}
Evidently the value of ${\mathcal K}_0$ can be chosen
in the above range such that $\operatorname{tr} M$ avoids lower
order resonances, i.e.
$\operatorname{tr} M \neq -2, -1, 0, 2$ holds.
Such choice of ${\mathcal K}_0$ clearly can be made
to be uniformly $\operatorname{\mathcal{O}}(\varepsilon)$.

Note, however, that the above requires that ${\mathcal K}_0$
is positive. And since there are two such points, namely
$\Gamma_0$ and its symmetric copy, it is impossible
to have a smooth $\Gamma$ with all positive curvature.
Therefore, imposing global smoothness on $\Gamma$
requires the curvature to become negative in order for $\Gamma$
to connect $\Gamma_0$ and its symmetric copy.

In order for the orbit $\gamma$ to actually exist
it is necessary that the boundary component
$\Gamma$ must not intersect $\gamma$ in places other
than $\Gamma_0$ and its symmetric copy.
This can be guaranteed in many ways.
On such choice of a smooth $\Gamma$ that also stays entirely
withing the vertical strip of width $\Delta$ and is symmetric
about the horizontal axis
is illustrated in Fig.~\ref{fig_example_posK0} below.
\begin{figure}[!ht]
  \includegraphics[height=0.4\textheight]{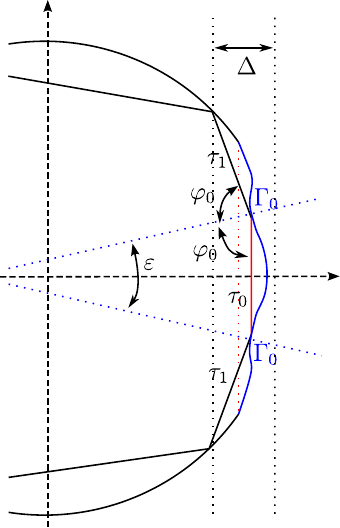}
  \caption{And example of a smooth symmetric
  boundary component $\Gamma$
  that has $\Gamma_0>0$, remains withing the
  vertical strip of width $\Delta$, has curvature uniformly
  bounded by $\operatorname{\mathcal{O}}(\varepsilon)$, and is such that the $N+2$--periodic
  orbit $\gamma$ is nonlinearly stable.}
  \label{fig_example_posK0}
\end{figure}

If $\Gamma$ is required only to be piece-wise smooth,
then $\Gamma$ can have positive curvature everywhere
without intersecting $\gamma$ in places other
than $\Gamma_0$ and its symmetric copy.
One of the many such choices of $\Gamma$ that also stays entirely
withing the vertical strip of width $\Delta$ and is symmetric
about the horizontal axis
is illustrated in Fig.~\ref{fig_example_posK0_piecewise}.
\begin{figure}[!ht]
  \includegraphics[height=0.4\textheight]{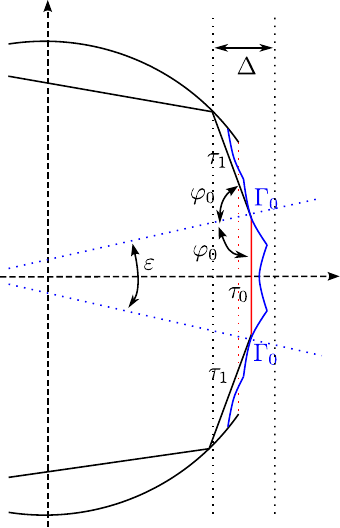}
  \caption{And example of a piece-wise smooth symmetric
  boundary component $\Gamma$ with everywhere positive
  curvature that remains withing the
  vertical strip of width $\Delta$, and has curvature uniformly
  bounded by $\operatorname{\mathcal{O}}(\varepsilon)$, and is such that the $N+2$--periodic
  orbit $\gamma$ is nonlinearly stable.}
  \label{fig_example_posK0_piecewise}
\end{figure}

Regardless of the smoothness condition imposed on $\Gamma$,
the constructions of $\Gamma$ can be carried out
for any one $\varepsilon_0$ small enough. Then we can find a constant
$C>0$ such that
$ |{\mathcal K}|\,H \leq C\,\frac{\Delta}{H} $
holds on $\Gamma$ for $\varepsilon = \varepsilon_0$.
From this we construct $\Gamma$ for all other
$\varepsilon < \varepsilon_0$ by a linear transformation that simply
scales the horizontal direction by the factor
$ \frac{ \tan\frac{\varepsilon}{2} }{ \tan\frac{\varepsilon_0}{2} } $.
This ensures that the tangent at $\Gamma_0$
has angle $\frac{\varepsilon}{2}$ with the vertical direction,
and that the curvature ${\mathcal K}_0$ scales
proportionally to $\varepsilon$ as well as staying within the
admissible range for ellipticity of $\gamma$. Therefore,
$ |{\mathcal K}|\,H \leq (1+C)\,\frac{\Delta}{H} $
holds on $\Gamma$ for all $\varepsilon< \varepsilon_0$,
and completes the entire construction of $\Gamma$
as claimed in Theorem~\ref{thm_main}, except for the nonlinear
stability of $\gamma$.

\begin{remark}
  The construction of tables with elliptic orbits shown in
  Fig.~\ref{fig_example_posK0} and
  Fig.~\ref{fig_example_posK0_piecewise} shows that
  hyperbolicity of moon billiards close to the folded flower billiard table
  (numerical evidence for this is presented in \cite{MR3456006})
  must rely on a mechanism that prevents elliptic periodic like
  the one shown in Fig.~\ref{fig_example_posK0_piecewise} to exist.
  It seems, however, unclear what exactly that mechanism is.
\end{remark}

Using the arguments presented in
\cites{MR2563800,bunimovich2018smoothconvexchaoticbilliard}
the local geometry of boundary component $\Gamma$
can be changed in a neighborhood of $\Gamma_0$
by an arbitrarily small amount in the sense of the $C^5$ topology
such that the resulting $N+2$--periodic orbit $\gamma$ is
nonlinearly stable in the sense that the corresponding
first Birkhoff coefficient can be made non-zero.
This completes the proof of Theorem~\ref{thm_main} for cases of
boundary components with positive curvature at $\Gamma_0$.

\subsection{Strictly convex boundary components}
\label{sect_examples_convex}

Suppose that
$
\tau_0
<
\frac{1}{N}\,\tau_c
+
(2\,\tau_1 + \tau_0 - \tau_c)
$, i.e. suppose $R_2 < 0$.
In more geometric terms this means (recall \eqref{eqn_cond_t0}
and Remark~\ref{rem_trM_vs_R0_cond})
that
$
\tau_0 < h + \operatorname{\mathcal{O}}(\varepsilon)
=
\frac{1}{N}\,2\,r \sin\frac{\pi}{N} + \operatorname{\mathcal{O}}(\varepsilon)
$, and thus corresponds to having chosen $\tau_0$ shorter
than $h$ (see Fig.~\ref{fig_setup}).

By Item~(\ref{item_2_prop_trM_vs_R0}) of Proposition~\ref{prop_trM_vs_R0},
if the boundary curvature ${\mathcal K}_0$
at $\Gamma_0$ is such that
$R_2 - \frac{2}{\tau_0} < {\mathcal R}_0 < R_2$, then
the periodic orbit $\gamma$ is elliptic.
In more explicit terms this condition can be written as
\begin{equation*}
  0
  <
  \frac{2
  }{
  \frac{1}{N}\,\tau_c
  +
  (2\,\tau_1 + \tau_0 - \tau_c)
  -
  \tau_0
  }
  <
  - \frac{2\,{\mathcal K}_0}{\sin\frac{\varepsilon}{2} }
  <
  \frac{2}{\tau_0}
  \frac{
  \frac{1}{N}\,\tau_c
  +
  (2\,\tau_1 + \tau_0 - \tau_c)
  }{
  \frac{1}{N}\,\tau_c
  +
  (2\,\tau_1 + \tau_0 - \tau_c)
  -
  \tau_0
  }
  \;.
\end{equation*}

In order to better understand this condition, note that the point
$\Gamma_0$ and its symmetric copy uniquely determine
a circle that contains both of them and matches the required
normal lines.
This circle has radius ${\rho}_\star$, and thus curvature
${\mathcal K}_\star = -\frac{1}{{\rho}_\star}$, given by
\begin{equation}
  \label{eqn_circle_radius_fit}
  {\rho}_\star 
  =
  \frac{\tau_0}{ 2 \sin\frac{\varepsilon}{2} }
  \;,\quad
  {\mathcal K}_\star = - \frac{ 2 \sin\frac{\varepsilon}{2} }{\tau_0}
  \;.
\end{equation}
Because the vertical line segment of length $\tau_0$ connects
$\Gamma_0$ and its symmetric copy and must correspond
to the radial angle $\varepsilon$ we see that
$ \tau_0 = 2\,{\rho}_\star \sin\frac{\varepsilon}{2} $
must hold.

Then the above condition on the choice of
$- \frac{2\,{\mathcal K}_0}{\sin\frac{\varepsilon}{2} }$
can be rewritten as
\begin{equation}
  \label{eqn_cond_rStar_r0}
  0
  <
  \frac{1}{2}
  \frac{ \tau_0
  }{
  \frac{1}{N}\,\tau_c
  +
  (2\,\tau_1 + \tau_0 - \tau_c)
  -
  \tau_0
  }
  <
  \frac{{\mathcal K}_0}{ {\mathcal K}_\star }
  <
  \frac{1}{2}
  \frac{
  \frac{1}{N}\,\tau_c
  +
  (2\,\tau_1 + \tau_0 - \tau_c)
  }{
  \frac{1}{N}\,\tau_c
  +
  (2\,\tau_1 + \tau_0 - \tau_c)
  -
  \tau_0
  }
  \;.
\end{equation}
Note that if $\tau_0$ is such that
\begin{equation}
  \label{eqn_cond_t0_negK0}
  \frac{1}{2} \Big[
  \frac{1}{N}\,\tau_c
  +
  (2\,\tau_1 + \tau_0 - \tau_c)
  \Big]
  <
  \tau_0
  <
  \frac{2}{3}\Big[
  \frac{1}{N}\,\tau_c
  +
  (2\,\tau_1 + \tau_0 - \tau_c)
  \Big]
  \;,
\end{equation}
which is stronger than our earlier condition made on $\tau_0$
at the beginning of this section,
then
\begin{equation*}
  \frac{1}{2}
  \frac{ \tau_0
  }{
  \frac{1}{N}\,\tau_c
  +
  (2\,\tau_1 + \tau_0 - \tau_c)
  -
  \tau_0
  }
  <
  1
  \quad\text{and}\quad
  1
  <
  \frac{1}{2}
  \frac{
  \frac{1}{N}\,\tau_c
  +
  (2\,\tau_1 + \tau_0 - \tau_c)
  }{
  \frac{1}{N}\,\tau_c
  +
  (2\,\tau_1 + \tau_0 - \tau_c)
  -
  \tau_0
  }
\end{equation*}
hold. Therefore, if \eqref{eqn_cond_t0_negK0} is satisfied, then
\eqref{eqn_cond_rStar_r0} shows that
the choice ${\mathcal K}_0 = {\mathcal K}_\star$
is admissible in order to make the periodic orbit $\gamma$ elliptic.

The geometric meaning of \eqref{eqn_cond_t0_negK0}
is that $\tau_0$ is chosen so that (within $\operatorname{\mathcal{O}}(\varepsilon)$)
$ \frac{1}{2}\,h < \tau_0 < \frac{2}{3}\, h $, i.e.
$
\frac{1}{N}\,r \sin\frac{\pi}{ N }
<
\tau_0
<
\frac{4}{3}\, \frac{1}{N}\,r \sin\frac{\pi}{ N }
$ holds within an error of $\operatorname{\mathcal{O}}(\varepsilon)$.

Now we can construct a specific boundary component
$\Gamma$ that is convex, symmetric about the horizontal
axis, and results in an elliptic $N+2$--periodic orbit $\gamma$.
For example, we can use a circular arc of curvature equal to
${\mathcal K}_\star$ that contains both $\Gamma_0$
and its symmetric copy, and extends a little bit past them. Then
the curvature is reduced symmetrically to obtain a boundary
component $\Gamma$
that does not intersect the orbit $\gamma$ again, and remains strictly
convex by keeping the curvature strictly negative. Clearly, this
can be done such that $\Gamma$ remains within the vertical
strip of width $\Delta$ (see Fig.~\ref{fig_setup}).
An illustration of this construction is shown in
Fig.~\ref{fig_example_negK0}
\begin{figure}[!ht]
  \includegraphics[height=0.4\textheight]{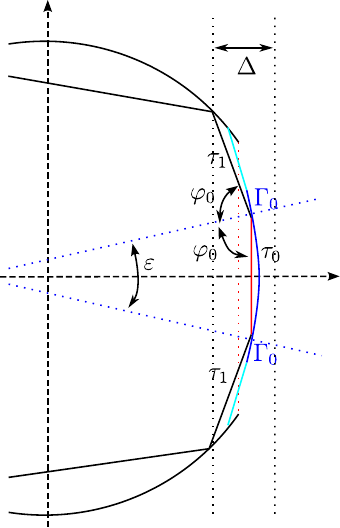}
  \caption{And example of a symmetric strictly convex
  boundary component $\Gamma$
  that remains withing the
  vertical strip of width $\Delta$, has curvature uniformly
  bounded by $\operatorname{\mathcal{O}}(\varepsilon)$, and is such that the $N+2$--periodic
  orbit $\gamma$ is nonlinearly stable.}
  \label{fig_example_negK0}
\end{figure}
Note further that the largest curvature (in absolute value) of
$\Gamma$ is $|{\mathcal K}_0|$, which we
set equal to ${\mathcal K}_\star$. Therefore, using
\eqref{eqn_cond_t0_negK0} to bound $\frac{1}{\tau_0}$,
we see that
\begin{align*}
  \frac{ |{\mathcal K}|\,H }{ \frac{\Delta}{H} }
  &\leq
  \frac{|{\mathcal K}_\star|\,H }{ \frac{\Delta}{H} }
  =
  \frac{ 2 \sin\frac{\varepsilon}{2} }{\tau_0}
  \frac{ 4\,r \sin^2\frac{\pi}{N}
  }{ \sin\alpha_1\,\tan\varepsilon }
  \leq
  \frac{ 2 \sin\frac{\varepsilon}{2} }{
  \frac{1}{2} [
  \frac{1}{N}\,\tau_c
  +
  (2\,\tau_1 + \tau_0 - \tau_c)
  ]
  }
  \frac{ 4\,r \sin^2\frac{\pi}{N}
  }{ \sin\alpha_1\,\tan\varepsilon }
  \\
  &=
  4\,N + \operatorname{\mathcal{O}}(\varepsilon)
\end{align*}
holds on all of $\Gamma$. This shows that there exists
$C>0$ such that for all $\Delta>0$ small enough
$|{\mathcal K}|\,H \leq C\,\frac{\Delta}{H}$ holds on
$\Gamma$. In remains to prove the nonlinear stability of
$\gamma$.

\begin{remark}
  Note that taking $\Gamma$ to be just a circular arc
  of curvature ${\mathcal K}_\star$ would not work, as
  this boundary component would intersect the orbit $\gamma$ again
  before reaching the circular arc, and
  thus making the orbit $\gamma$ not a proper billiard orbit
  on the resulting table. Indeed, it is this mechanism of eliminating
  potentially elliptic orbits that allows for hyperbolicity
  for lemon billiards \cites{MR3452271,MR4183371}
\end{remark}

Evidently the above made choice of ${\mathcal K}_0$
equal to ${\mathcal K}_\star$ is quite arbitrary, and can
be easily adapted to other choices. As described above the resulting
$N+2$--periodic periodic orbit $\gamma$ is such that
$\operatorname{tr} M$ avoids lower order resonances, i.e.
$\operatorname{tr} M \neq -2, -1, 0, 2$ holds.
Furthermore, using the arguments presented in
\cites{MR2563800,bunimovich2018smoothconvexchaoticbilliard}
the boundary component $\Gamma$ as described above
can be changed in a neighborhood of $\Gamma_0$
by an arbitrarily small amount in the sense of the $C^5$ topology
such that the resulting $N+2$--periodic
orbit $\gamma$ is nonlinearly stable with a non-vanishing
first Birkhoff coefficient.
This completes the proof of Theorem~\ref{thm_main} for the case of
strictly convex boundary components with everywhere
strictly negative curvature.

\bibliography{document}
\bibliographystyle{amsplain}

\end{document}